\newcommand{\rmnum}[1]{\romannumeral #1}
\title{A Transformation Approach that Makes SPAI, PSAI and RSAI Procedures
Efficient for Large Double Irregular Nonsymmetric Sparse Linear Systems\thanks
{Supported in part by National Science Foundation of China (No. 11371219).}}
\author{Zhongxiao Jia\thanks{Department of Mathematical
Sciences, Tsinghua University, Beijing, 100084,
People's Republic of China,
\email{jiazx@tsinghua.edu.cn}.}
\and Wenjie Kang\thanks{Department of Mathematical Sciences,
Tsinghua University, Beijing, 100084,
People's Republic of China, \email{kangwj11@mails.tsinghua.edu.cn}.}}
\begin{document}
\maketitle
\slugger{simax}{xxxx}{xx}{x}{x--x}

\begin{abstract}
A sparse matrix is called double
irregular sparse if it has at least one relatively dense column and row,
and it is double regular sparse if all the columns and rows of it are sparse.
The sparse approximate inverse preconditioning procedures
SPAI, PSAI($tol$) and RSAI($tol$) are costly and even impractical
to construct preconditioners
for a large sparse nonsymmetric linear system with the coefficient matrix
being double irregular sparse, but they are efficient for double regular sparse
problems.
Double irregular sparse linear systems have a wide range of applications, and
24.4\% of the nonsymmetric matrices in the Florida University collection are
double irregular sparse. For this class of problems, we propose
a transformation approach, which consists of four steps:
(i) transform a given double irregular sparse
problem into a small number of double regular sparse ones with the same
coefficient matrix $\hat{A}$, (ii) use SPAI, PSAI($tol$)
and RSAI($tol$) to construct sparse approximate inverses $M$ of $\hat{A}$, (iii)
solve the preconditioned double regular sparse linear systems
by Krylov solvers, and (iv) recover an approximate solution of
the original problem with a prescribed accuracy from those of the double
regular sparse ones.
A number of theoretical and practical issues are considered on
the transformation approach. Numerical experiments on a number of real-world
problems confirm the very sharp
superiority of the transformation approach to the standard approach that
preconditions the original double irregular sparse problem by SPAI, PSAI($tol$)
or RSAI($tol$) and solves the resulting preconditioned system by Krylov solvers.
\end{abstract}

\begin{keywords}
Linear system, preconditioning, sparse approximate
inverse, double irregular sparse, double regular sparse, transformation
approach, F-norm minimization, Krylov solver
\end{keywords}

\begin{AMS}
65F10
\end{AMS}

\pagestyle{myheadings}
\thispagestyle{plain}
\markboth{ZHONGXIAO JIA AND WENJIE KANG}{Sparse Approximate Inverse
Preconditioning for Double Irregular Sparse S Problems}

\section{Introduction}\label{sec:1}
In scientific and engineering computing, a core task is to solve
the large sparse linear system
\begin{equation}\label{equation}
  Ax=b,
\end{equation}
where $A$ is an $n\times n$ real nonsymmetric and nonsingular matrix, and $b$
is a given $n$-dimensional vector. Krylov iterative solvers, such
as the generalized minimal residual method (GMRES) and the
biconjugate gradient stabilized method (BiCGStab) \cite{saad03},
have been commonly used for solving \eqref{equation} in nowdays.
However, the convergence of Krylov solvers is generally
extremely slow when $A$ is ill conditioned or has bad spectral
property \cite{saad03}. So it is necessary to use
preconditioning techniques to accelerate the convergence
of Krylov solvers. Sparse approximate inverse preconditioning procedures
have been one class of the most important general-purpose preconditioning
procedures over the past two decades \cite{benzi02,Ferronato12,saad03}. Their goal
is to construct a sparse approximate inverse $M\approx A^{-1}$ or
a factorized $M=M_1M_2\approx A^{-1}$ directly. The preconditioned
linear system is $AMy=b$ with $x=My$, $MAx=Ab$ or $M_1AM_2y=M_1b$ with $x=M_2y$,
which corresponds to the right, left or factorized preconditioning, respectively.

There are mainly two kinds of constructing a sparse approximate inverse
$M$. Typical algorithms of constructing a factorized sparse approximate inverse
$M=M_1M_2$ are approximate inverse (AINV) type algorithms
\cite{benzi96,benzi98} and the balanced incomplete factorization
(BIF) algorithm \cite{tuma08,tuma10}. Stabilized and block versions of the AINV
factorized approximate inverse preconditioner are proposed in \cite{benzi01}.
Kolotilina and Yeremin \cite{kolotilina93} have proposed a
factorized sparse approximate inverse (FSAI) preconditioning procedure
with a prescribed sparsity pattern of $M$.
FSAI has been generalized to block form, called BFSAI, in \cite{Janna10}.
An adaptive algorithm which gets the sparsity pattern of the BFSAI
preconditioner $M$ can be found in \cite{Janna14,Janna11,Janna13}.

The other kind of approach is based on F-norm minimization, which is
inherently parallelizable and computes a right preconditioner $M$ by
minimizing $\|AM-I\|_{F}$ with certain sparsity constraints
on $M$, where $\|\cdot\|_{F}$ denotes the Frobenius norm of a matrix
and $I$ is the identity matrix of order $n$. Applying this kind of approach to
$A^T$, one can compute a left preconditioner $M$.
A key of this kind of approach is the efficient determination of
an effective sparsity pattern of $A^{-1}$.
If the sparsity pattern of $M$ is prescribed, the resulting procedure
is called a static one; if the sparsity pattern of $M$ is adaptively
determined during the computational process, the procedure is called adaptive.
For a-priori effective sparsity patterns of $A^{-1}$, we refer the
reader to \cite{benson82,benson84,chow00,gilbert94,Huckle99}.
The SPAI algorithm proposed by Grote and Huckle \cite{Grote97}
has been a popular adaptive F-norm minimization based sparse approximate inverse
preconditioning procedure. It has been generalized
to block form, called BSPAI, in \cite{barnard99}.
Jia and Zhu \cite{Jia09} have
proposed an adaptive Power sparse approximate inverse  (PSAI) preconditioning
procedure and developed a practical PSAI($tol$) algorithm that,
during the loops, drops the nonzero entries
in $M$ whose sizes are below some tolerance $tol$. PSAI($tol$) has been shown
to be at least competitive with and can be substantially more effective than
SPAI \cite{Jia13b,Jia09}. Jia and Zhang \cite{Jia13a} have recently established
a mathematical theory on dropping tolerances $tol$ for all static F-norm
minimization based sparse approximate inverse procedures and
PSAI($tol$). Very recently,
the authors of this paper have proposed
a Residual based Sparse Approximate Inverse (RSAI) preconditioning procedure
\cite{Jia15}, which is different from the way used in SPAI and is based on only
the {\em partial but dominant} other than {\em all indices} of nonzero entries in
the current residual. They have developed
a practical RSAI($tol$) algorithm with dropping strategies exploited.
RSAI($tol$) improves the computational efficiency of SPAI substantially and
meanwhile constructs effective preconditioners $M$.
For more on sparse approximate inverse preconditioning procedures, we refer
the reader to \cite{benzi02,benzi99,chen05,Ferronato12,saad03}.

For SPAI and PSAI($tol$), Jia and Zhang \cite{Jia13b} have
investigated the efficiency of constructing $M$ and the preconditioning
effectiveness of $M$. They introduce the term of 'irregular sparse matrix', where
an irregular sparse $A$ means that it has at least one relatively dense column,
whose number of nonzero entries is substantially more than the average number of
nonzero entries per column of $A$. In implementations,
we call $A$ column irregular sparse if it has one column whose number of
nonzero entries is at least $10p$, where $p$ is the average number of nonzero
entries per column of $A$ \cite{Jia13b}.
Following this standard, it is reported in \cite{Jia13b}
that column irregular linear problems have a wide range of applications
and 34\% of the square matrices in the
University of Florida sparse matrix collection \cite{davis11}
are column irregular sparse; see \cite{Jia13b} for further information
on where column irregular sparse matrices come from and how dense irregular
columns are, etc. For a column irregular sparse $A$,
Jia and Zhang \cite{Jia13b} have shown that SPAI and PSAI($tol$) are
costly and may be impractical; they have given theoretical
arguments and numerical evidence
that $M$ obtained by SPAI may be ineffective for preconditioning \eqref{equation},
but $M$ by PSAI($tol$) is effective though its construction is costly.
Their analysis has also revealed that
SPAI and PSAI($tol$) are costly when applied to $A^T$ for computing
left preconditioners for $A$ column irregular sparse, that is,
we compute $M$ by minimizing $\|A^TM^T-I\|_F$
with certain sparsity constraints on $M$.

In the same way, we call $A$ row irregular sparse if it
has at least one relatively dense row. If $A$ is both column and row
irregular sparse, it is called double irregular
sparse. In contrast, if all the columns and rows of $A$ are sparse, $A$
is called double regular sparse. Using the same standard as
that of an irregular column, we define an irregular row. By this definition,
a column irregular symmetric matrix is double irregular sparse.
We have investigated all the real nonsymmetric square matrices in
the collection \cite{davis11}, which contains 775 matrices. We have found that
189 of them are double irregular sparse,
that is, 24.4\% of the nonsymmetric matrices in the collection are
double irregular. This indicates that double irregular sparse
linear systems have a wide range of practical applications.
Numerical experiments have illustrated that RSAI($tol$) improves the
computational efficiency of SPAI
substantially for column irregular sparse problems \cite{Jia15}.
However, we will see that RSAI($tol$) is expensive
and may be impractical for  row irregular sparse problems. This is also the
case for PSAI($tol$); see \cite{Jia13b}.

Summarizing the above, we come to the conclusion that SPAI, PSAI($tol$) and
RSAI($tol$) are costly and even impractical for double
irregular sparse problems. Therefore,
how to efficiently use the three preconditioning procedures
to solve double irregular sparse linear systems is of great importance. We
will focus this topic in the current paper.

As is known from \cite{Jia15,Jia13b,Jia09}, a
common and attractive feature of the aforementioned three procedures is that
they can construct preconditioners
$M$ efficiently for $A$ double regular sparse, among of
which PSAI($tol$) is most effective and SPAI and RSAI($tol$)
are comparably effective for preconditioning double regular sparse linear
systems. For the column irregular sparse \eqref{equation},
making use of the Sherman-Morrison-Woodbury formula,
Jia and Zhang \cite{Jia13b} have proposed an approach that
transforms \eqref{equation}
into a small number of column regular sparse ones with the same coefficient
matrix and multiple right-hand sides, so that SPAI and PSAI($tol$)
can construct preconditioners for
the regular sparse problems much more efficiently than they do
for \eqref{equation} directly.
An approximate solution of the original
system with a prescribed accuracy $\varepsilon$ is then recovered from those of
the regular sparse ones with the accuracy determined by $\varepsilon$.
The numerical experiments in \cite{Jia13b} have
indicated that such transformation approach
speeds up the computational efficiency of SPAI and PSAI($tol$) very substantially,
compared to them applied to the original problem. However,
the transformation approach does not suit for row irregular
sparse problems, for which SPAI, PSAI($tol$) and RSAI($tol$)
are still costly, as has been pointed out above.

In this paper, we show that a double irregular sparse
matrix $A$ can be expressed as the sum of a double regular sparse $\hat{A}$
and two certain low rank matrices. Motivated by the work \cite{Jia13b},
by exploiting the Sherman-Morrison-Woodbury formula \cite{matrixcomputations}
twice, we propose an approach that transforms the irregular sparse $A$ into
a regular sparse $\hat{A}$ and \eqref{equation} into $s_1+s_2+1$
linear systems with the same coefficient matrix $\hat{A}$ and $s_1+s_2+1$
right-hand sides, where $s_1$ and $s_2$ are the numbers of relatively dense
columns and rows, respectively. The transformation consists of two steps:
first transform $A$ with $s_1$ dense columns into a column regular
sparse matrix $\tilde{A}$, then transform $\tilde{A}$
with $s_2$ dense rows into a double regular sparse matrix $\hat{A}$.
Since $A$ is supposed to be sparse, $s_1$ and $s_2$
must be very small. As it will turn out,
the double regular sparse $\hat{A}$ equals $A$ minus two matrices of low ranks
$s_1$ and $s_2$, respectively.
Then we use SPAI, PSAI($tol$) and RSAI($tol$) to construct preconditioners
for these double regular sparse systems and solve the resulting preconditioned
linear systems by Krylov solvers. Finally, we
recover the solution of \eqref{equation} from those of the double regular sparse
ones. The above whole process is called the transformation approach.
We consider a number of theoretical and practical issues, including the
non-singularity of $\hat{A}$ and its conditioning. Particularly,
we prove how to design stopping criteria for the $s_1+s_2+1$ double regular linear
systems in order to ultimately obtain an approximate solution of \eqref{equation}
with a prescribed accuracy $\varepsilon$ from those of the $s_1+s_2+1$ double
regular ones. Numerical experiments
will exhibit the very sharp efficiency
of our transformation approach to the standard approach that first
preconditions \eqref{equation} by SPAI, PSAI($tol$) or RSAI($tol$)
and then solves the preconditioned linear system by Krylov solvers.
We will demonstrate that, due to the memory storage and huge
computational cost, SPAI, PSAI($tol$) and RSAI($tol$) are either out
of memory or cannot generate
preconditioners within 100 hours when directly applied to six of
the ten real-world double irregular sparse problems, but our transformation
approach works very efficiently for all the test problems and consumes
only a few seconds to no more than half an hour for the six hard problems
that the standard approach fails to solve.

The paper is organized as follows. In Section \ref{sec:2},
we briefly review SPAI, PSAI($tol$) and RSAI($tol$) procedures.
In Section \ref{sec:3}, we propose our transformation approach
for solving the double irregular sparse problem \eqref{equation}.
In Section \ref{sec:4}, we consider some theoretical and
practical issues. In Section~\ref{sec:5}, we report on numerical experiments,
confirming the very sharp superiority of our transformation approach to the
standard approach that
preconditions \eqref{equation} by SPAI, PSAI($tol$) or RSAI($tol$) directly
and solves it by Krylov solvers. Finally, we conclude the paper in
Section~\ref{sec:6}.

\section{The SPAI, PSAI($tol$) and RSAI($tol$) procedures}\label{sec:2}
For an F-norm minimization based sparse approximate inverse  preconditioning procedure, we
need to solve the constrained minimization problem
\begin{equation}\label{minfnorm}
\min_{M\in \mathcal{M}}\|AM-I\|_{F},
\end{equation}
where $\mathcal{M}$ is the set of matrices with a given sparsity
pattern $\mathcal{J}$. Define $\mathcal{M}_{k}$ as the set of
$n$-dimensional vectors whose sparsity pattern is
$\mathcal{J}_{k}=\{i\mid(i,k)\in \mathcal{J}\}$, and let
$M=(m_1,m_2,\ldots,m_n)$. Then \eqref{minfnorm} is recast as the $n$ independent
constrained least squares (LS) problems
\begin{equation}\label{min2norm}
  \min_{m_{k}\in \mathcal{M}_{k}}\|Am_{k}-e_{k}\|,\ k=1,2,\ldots,n,
\end{equation}
where $e_{k}$ is the $k$th column of $I$. Here and hereafter,
$\|\cdot\|$ denotes the 2-norm of a matrix or vector.
For each $k$, denote by $\mathcal{I}_{k}$
the set of indices of nonzero rows of $A(\cdot,\mathcal{J}_{k})$. Then
\eqref{min2norm} amounts to solving the smaller unconstrained LS problems
\begin{equation}\label{reducemin2norm}
\min_{\tilde{m}_{k}}\|A(\mathcal{I}_{k},\mathcal{J}_{k})m_{k}(\mathcal{J}_{k})
-e_{k}(\mathcal{I}_{k})\|,
k=1,2,\ldots,n,
\end{equation}
which can be solved by QR decompositions in parallel.

If $M$ is not yet good enough, that is,
\eqref{min2norm} does not drop below a prescribed tolerance $\eta$
for at least one $k$, one can use some adaptive sparse approximate inverse
preconditioning procedure, e.g., SPAI, PSAI($tol$) or
RSAI($tol$) to improve it by augmenting
or adjusting the sparsity pattern $\mathcal{J}_{k}$ dynamically.
We highlight that, mathematically, the unique fundamental
distinction of all the adaptive F-norm minimization based sparse approximate
inverse preconditioning procedures is the way that augments or adjusts
the sparsity pattern of $M$. It has been shown in \cite{Jia13b,Jia09}
that
PSAI($tol$) captures the sparsity pattern of $A^{-1}$ more effectively
than SPAI; moreover, the effectiveness of PSAI($tol$) is independent of
whether $A$ is (column or row) regular sparse or not, while SPAI is
more effective for regular sparse matrices than for irregular sparse ones.
In \cite{Jia15}, the authors have shown that
RSAI($tol$) and SPAI compute comparably effective preconditioners but
the former is more efficient than the latter.
In what follows we briefly review SPAI, PSAI($tol$) and RSAI($tol$).

\subsection{The SPAI procedure}\label{sec:2.1}
Denote by $\mathcal{J}_{k}^{(l)}$ the sparsity pattern of $m_{k}$
after $l$ loops starting with an initial pattern $\mathcal{J}_{k}^{(0)}$,
and define $\mathcal{I}_{k}^{(l)}$ to be
the set of all nonzero row indices of $A(\cdot,\mathcal{J}_{k}^{(l)})$.
Denote the residual of \eqref{min2norm} by
\begin{equation}\label{residual}
  r_{k}=Am_k-e_k.
\end{equation}
If $\|r_{k}\|=\|Am-e_k\|>\eta$, denote by $\mathcal{L}_{k}$ the set of indices $i$
for which $r_{k}(i)\neq 0$ and $\mathcal{N}_{k}$ the set
of indices of nonzero columns of $A(\mathcal{L}_{k},\cdot)$. Then
\begin{equation}\label{argument}
  \hat{\mathcal{J}}_{k}=\mathcal{N}_{k}\setminus\mathcal{J}_{k}^{(l)}
\end{equation}
forms the new candidates for augmenting $\mathcal{J}_{k}^{(l)}$
in the next loop of SPAI, in which $\mathcal{J}_{k}^{(l)}$
is updated as follows  \cite{Grote97}:
For each $j\in \hat{\mathcal{J}}_{k}$, consider the
one-dimensional minimization problem
\begin{equation}\label{onedimensional}
  \min_{\mu}\|r_{k}+\mu Ae_{j}\|,
\end{equation}
whose solution is
\begin{equation}\label{mu}
  \mu_{j}=-\frac{r_{k}^{T}Ae_{j}}{\|Ae_{j}\|^{2}},
\end{equation}
and the 2-norm $\rho_{j}$ of the new residual $r_{k}+\mu_{j}Ae_{j}$
satisfies
\begin{equation}\label{2normofresidual}
  \rho_{j}^{2}=\|r_{k}\|^{2}-\frac{(r_{k}^{T}Ae_{j})^{2}}
  {\|Ae_{j}\|^{2}}.
\end{equation}
SPAI selects a few, say $1\sim5$, most profitable indices from
$\hat{\mathcal{J}}_{k}$
with the smallest $\rho_{j}$ and adds them to $\mathcal{J}_{k}^{(l)}$
to obtain $\mathcal{J}_{k}^{(l+1)}$.
Define $\hat{\mathcal{I}}_{k}$ to be the set of indices
of new nonzero rows corresponding to the most profitable indices added, and
let $\mathcal{I}_{k}^{(l+1)}=\mathcal{I}_{k}^{(l)}
\bigcup \hat{\mathcal{I}}_{k}$. Then we solve the new LS problem
\begin{equation}\label{updateLS}
  \min\|A(\mathcal{I}_{k}^{(l+1)},\mathcal{J}_{k}^{(l+1)})
  m_k(\mathcal{J}_{k}^{(l+1)})
  -e_{k}(\mathcal{I}_{k}^{(l+1)})\|
\end{equation}
whose solution can be updated from the previous $m_{k}$ efficiently.
Proceed in such a way until $\|r_k\|\leq\eta$
or $l$ reaches the prescribed maximum $l_{\max}$, where
$\eta$ is a mildly small tolerance,
usually, $0.1\sim0.4$; see \cite{Grote97,Jia13b,Jia09}. Obviously, if the
cardinality of $\hat{\mathcal{J}}_{k}$ is very big,
SPAI is costly, which corresponds to the case that the $k$th column of $A$
is dense \cite{Jia13b,Jia09}. Precisely,
suppose that the $k$th column of $A$ is almost fully dense. SPAI has to
compute almost $n$ numbers $\rho_j$ by \eqref{2normofresidual}, then
sort almost $n$ indices in
$\hat{\mathcal{J}}_{k}$ by comparing the sizes of
$\rho_j$ and finally pick up a few most profitable indices
among them. This is very time consuming and can make SPAI fatally slow.
Similarly, it is easy to justify that
SPAI is costly when an index in $\mathcal{L}_k$ corresponds to a relatively
dense row of $A$, which results in a very big cardinality of $\hat{\mathcal{J}}_{k}$.
Therefore, SPAI is costly and even impractical for $A$ double irregular sparse.

\subsection{The PSAI($tol$) procedure}\label{sec:2.2}

We first review the basic PSAI (BPSAI) procedure proposed in \cite{Jia09}, which
is motivated by the Cayley--Hamilton theorem: $A^{-1}$ can be expressed as
\begin{equation}\label{cayleyHamilton}
 A^{-1}=\sum_{i=0}^{n-1}c_{i}A^{i},
\end{equation}
where $A^{0}=I$ and the $c_{i}$
are certain constants for $i=1,2,3,\ldots,n-1$. Denote by $\mathcal{P}(\cdot)$
the sparsity pattern of a matrix or vector, and define the matrix
$|A|=(|a_{ij}|)$. We see from \eqref{cayleyHamilton} that
$\mathcal{P}(A^{-1})\subseteq\mathcal{P}((I+|A|)^{n-1})$.
So the pattern $\mathcal{J}$ of a good sparse approximate inverse $M$ can be
taken as $\mathcal{P}((I+|A|)^{l_{\max}})$
for a given small $l_{\max}$ in BPSAI. As a result, the sparsity
pattern $\mathcal{J}_{k}$ of the $k$th column of $M$ is a subset of
$\bigcup_{i=0}^{l_{\max}}\mathcal{P}(|A|^{i}e_{k})$ since
$\mathcal{P}((I+|A|)^{l_{\max}})\subseteq\bigcup_{i=0}^{l_{\max}}\mathcal{P}(|A|^{i})$.

For $k=1,2,\ldots,n$, BPSAI updates the sparsity pattern $\mathcal{J}_{k}$
adaptively in the following way: For
$l=0,1,\ldots,l_{\max}$, denote by $\mathcal{J}_{k}^{(l)}$ the sparsity
pattern of $m_{k}$ at loop $l$ and by $\mathcal{I}_{k}^{(l)}$
the set of nonzero row indices of $A(\cdot,\mathcal{J}_{k}^{(l)})$. Define
$a_{k}^{(l+1)}=Aa_{k}^{(l)}$ with $a_{k}^{(0)}=e_{k}$. Then the sparsity
pattern
$\mathcal{J}_{k}^{(l+1)}=\mathcal{J}_{k}^{(l)}\bigcup\mathcal{P}(a_{k}^{(l+1)})$.
Denote by $\hat{\mathcal{I}}_{k}$ the set of
new nonzero row indices of $A(\cdot,\mathcal{J}_{k}^{(l+1)})$, and
let $\mathcal{I}_{k}^{(l+1)}=\mathcal{I}_{k}^{(l)}
\bigcup \hat{\mathcal{I}}_{k}$.
Then the new LS problem \eqref{reducemin2norm} can be solved
by updating $m_{k}$ instead of resolving it. Proceed in such a way
until $\|r_k\|\leq\eta$ or $l>l_{\max}$.

In order to develop a practical algorithm, we must control the
sparsity of $M$. Jia and Zhu \cite{Jia09} have proposed the PSAI($tol$)
algorithm, which, during the loops, drops those nonzero entries whose magnitudes
are below a prescribed threshold $tol$ and retains only those large ones.
It has turned out that the choice of $tol$ has strong effects on the
preconditioning effectiveness
and sparsity of $M$. Jia and Zhang \cite{Jia13a} have established a mathematical
theory on robust dropping tolerances for PSAI($tol$) and all the static
F-norm minimization based sparse approximate inverse
preconditioning procedures. According to the theory, they have
designed an adaptive and robust dropping criterion: At loop $l\leq l_{\max}$,
a nonzero entry $m_{jk}$ is dropped if
\begin{equation}\label{dropping}
  |m_{jk}|\leq\frac{\eta}{nnz(m_{k})\|A\|_{1}},j=1,2,\ldots,n,
\end{equation}
where $nnz(m_{k})$ is the number of nonzero entries in $m_{k}$ and
$\|\cdot\|_1$ is the 1-norm of a matrix. From the theory in \cite{Jia13a},
PSAI($tol$) with the above dropping criterion will compute
a preconditioner $M$ that is as sparse as possible and meanwhile
has comparable preconditioning quality to the one generated by
BPSAI without dropping any nonzero entries. However, PSAI($tol$) involves large
sized LS problems for column irregular sparse matrices and is thus costly or
simply faces out of memory,
though $M$ obtained by it is an effective preconditioner \cite{Jia13b}.
As a whole, if $A$ is double irregular sparse, PSAI($tol$) is costly
and may be impractical.

\subsection{The RSAI($tol$) procedure}\label{sec:2.3}

We first review the basic RSAI (BRSAI) procedure \cite{Jia15}.
Suppose that $\mathcal{J}_{k}^{(l)}$ is the sparsity pattern of $m_{k}$
generated by BRSAI after $l$ loops starting with the initial sparsity
pattern $\mathcal{J}^{(0)}_{k}$. If $m_{k}$ does not yet satisfy the
prescribed accuracy $\eta$, BRSAI improves $m_{k}$ by
augmenting its sparsity pattern as follows.

Denote by $\mathcal{L}_{k}$ the set of indices $i$ for which the
$i$-th entry $r_{k}(i)\neq0$ of $r_k$.
BRSAI takes precedence to reduce a few largest entries in $r_{k}$ since they
make the most contributions to the size of $\|r_k\|$. The indices
corresponding to the largest entries $i$ of $r_k$ are called the dominant indices.
Denote by $\hat{\mathcal{R}}_{k}^{(l)}$
the set of the dominant indices $i$ with the largest entries $|r_{k}(i)|$,
and define $\hat{\mathcal{J}}_{k}$ to be the set of all new column indices of
$A$ that correspond to $\hat{\mathcal{R}}_{k}^{(l)}$
but do not appear in $\mathcal{J}_{k}^{(l)}$. Then we set
\begin{equation*}
  \mathcal{J}_{k}^{(l+1)}=\mathcal{J}_{k}^{(l)}\bigcup\hat{\mathcal{J}}_{k}.
\end{equation*}
When choosing $\hat{\mathcal{R}}_{k}^{(l)}$ from $\mathcal{L}_{k}$
in the above way, it may happen that
$\hat{\mathcal{R}}_{k}^{(l)}=\hat{\mathcal{R}}_{k}^{(l-1)}$.
If so, set
\begin{equation*}
  \mathcal{R}_{k}^{(l)}=\bigcup_{i=0}^{l-1}
  \hat{\mathcal{R}}_{k}^{(i)},
\end{equation*}
and choose $\hat{\mathcal{R}}_{k}^{(l)}$ from the set whose elements are in
$\mathcal{L}_{k}$ but not in $\mathcal{R}_{k}^{(l)}$. In this way,
$\hat{\mathcal{R}}_{k}^{(l)}$ is always non-empty unless $m_k$ is exactly
the $k$th column of $A^{-1}$ \cite{Jia15}. Denote by $\hat{\mathcal{I}}_{k}$
the set of indices of new nonzero rows corresponding to the added column
indices $\hat{\mathcal{J}}_{k}$. Then we update
$\mathcal{I}_{k}^{(l+1)}=\mathcal{I}_{k}^{(l)}\bigcup\hat{\mathcal{I}}_{k}$
and solve the new LS problem \eqref{updateLS}, which
generates a better approximation $m_k$ to the $k$th column of $A^{-1}$.
Repeat this process for $k=1,2,\ldots,n$ until $\|r_k\|\leq\eta$
or $l$ exceeds $l_{\max}$.

Similar to the PSAI($tol$) algorithm, in order to control the sparsity
of $M$, a practical RSAI($tol$) algorithm has been developed
in \cite{Jia15} that introduces the dropping criterion \eqref{dropping}
into BRSAI. It has been shown in \cite{Jia15} that RSAI($tol$) is as
equally effective as SPAI, but it is more efficient than SPAI
because we uses only a few other than all indices of the nonzero $r_k(i)$
and do not compute possibly a great many numbers $\rho_j$ in SPAI,
avoiding sorting them and picking up the most profitable indices. However,
RSAI($tol$) may be costly or out of memory for $A$ is row irregular sparse:
Suppose that the $k$th row of $A$ is relatively dense. Then once
$\{k\}\subseteq\hat{\mathcal{R}}_{k}^{(l)}$, the resulting $m_k$ is also
relatively dense, leading to a relatively large sized \eqref{reducemin2norm}.
Therefore, if $A$ is double irregular sparse,
RSAI($tol$) may be very costly and impractical.

\section{Transformation of double irregular sparse linear systems into double regular
sparse ones}\label{sec:3}

The previous discussion has indicated that SPAI, PSAI($tol$) and RSAI($tol$) is
costly and even impractical when $A$ is double irregular sparse.
In order to improve their efficiency, we will propose a 
transformation approach, which consists
four steps: (i)  transform a double irregular sparse
\eqref{equation} into a small number of double regular sparse ones with the same
coefficient matrix $\hat{A}$, (ii) use SPAI, PSAI($tol$)
and RSAI($tol$) efficiently construct possibly effective
sparse approximate inverses $M$ of $\hat{A}$, (iii)
solve the preconditioned double regular sparse linear systems
by Krylov solvers, and (iv) recover an approximate solution of
\eqref{equation} with a prescribed accuracy from those of the double
regular sparse ones. One of the key ingredients of the transformation approach
is the following well-known Sherman-Morrison-Woodbury
formula \cite[p. 50]{matrixcomputations}).

\begin{lemma}\label{theorem1}
Let $U,V\in R^{n\times s}$ with $s\leq n$. If $A$ is nonsingular, then
$A+UV^{T}$ is nonsingular if and only if $I+V^{T}A^{-1}U$ is nonsingular.
Furthermore,
\begin{equation}\label{shermanMOrrisonWoodbury}
  (A+UV^{T})^{-1} = A^{-1} -A^{-1}U(I+V^{T}A^{-1}U)^{-1}V^{T}A^{-1}.
\end{equation}
\end{lemma}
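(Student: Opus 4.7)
The plan is to separate the claim into two parts: first the equivalence of nonsingularity between $A+UV^T$ and $I+V^TA^{-1}U$, and then the explicit formula for the inverse. For the first part I would factor out $A$ from the left by writing $A+UV^T = A(I_n + A^{-1}UV^T)$, which is legitimate because $A$ is nonsingular by hypothesis. Taking determinants, this reduces the claim to showing that $I_n + A^{-1}UV^T$ is nonsingular if and only if $I_s + V^TA^{-1}U$ is nonsingular.

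For the reduced claim I would invoke Sylvester's determinant identity, $\det(I_n + XY) = \det(I_s + YX)$, which holds for any $X\in\mathbb{R}^{n\times s}$ and $Y\in\mathbb{R}^{s\times n}$. Applying it with $X = A^{-1}U$ and $Y = V^T$ gives $\det(I_n + A^{-1}UV^T) = \det(I_s + V^TA^{-1}U)$, from which the equivalence of nonsingularity follows at once. If one prefers to avoid Sylvester's identity, the same equivalence can be established by a direct construction: assuming $I_s + V^TA^{-1}U$ is nonsingular, the candidate matrix on the right-hand side of \eqref{shermanMOrrisonWoodbury} is well defined, so verifying that it is a two-sided inverse of $A+UV^T$ simultaneously proves both nonsingularity and the formula.

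For the formula itself, the cleanest route is straightforward verification. I would let $M$ denote the proposed inverse and compute $(A+UV^T)M$ by expanding into four terms, then group the two terms containing the factor $(I+V^TA^{-1}U)^{-1}V^TA^{-1}$. The coefficient of this factor becomes $U+UV^TA^{-1}U = U(I+V^TA^{-1}U)$, which cancels exactly with the inverse on its right, leaving $I + UV^TA^{-1} - UV^TA^{-1} = I$. A symmetric computation of $M(A+UV^T)$ confirms the inverse on the other side.

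I do not anticipate any serious obstacle. The only subtlety is making sure the factor $(I_s + V^TA^{-1}U)^{-1}$ is meaningful at the moment it is written down, which is why I would either appeal to Sylvester's identity first or frame the whole argument as a verification under the hypothesis that $I_s + V^TA^{-1}U$ is invertible and then observe that the reverse implication follows by applying the same reasoning to the pair $(A+UV^T,-UV^T)$ using the already-established formula.
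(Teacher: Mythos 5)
Your proposal is correct. Note, however, that the paper offers no proof of this lemma at all: it is stated as a known result and attributed to Golub and Van Loan \cite[p.~50]{matrixcomputations}, so there is no in-paper argument to compare against. Your route is one of the standard ones and is sound on both counts. The factorization $A+UV^{T}=A(I_n+A^{-1}UV^{T})$ combined with Sylvester's identity $\det(I_n+XY)=\det(I_s+YX)$ gives $\det(A+UV^{T})=\det(A)\det(I_s+V^{T}A^{-1}U)$, which settles the nonsingularity equivalence in both directions at once since $\det(A)\neq 0$. The direct verification of the formula is also correct: in $(A+UV^{T})M$ the coefficient of $(I+V^{T}A^{-1}U)^{-1}V^{T}A^{-1}$ collapses to $U(I+V^{T}A^{-1}U)$, cancelling the inverse and leaving $I$, and the computation of $M(A+UV^{T})$ is symmetric. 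Your closing remark about making sure $(I_s+V^{T}A^{-1}U)^{-1}$ is well defined before it is used is the right caution, and the determinant identity resolves it cleanly; the alternative of deducing the reverse implication from the already-established formula applied to $(A+UV^{T},-UV^{T})$ also works but is more circuitous than needed.
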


In practical applications, one is typically interested in the formula
for $s\ll n$, which reduces to the Sherman-Morrison formula when $s=1$.

In what follows, for the given double irregular sparse $A$ we
assume that the $j_1,j_2,\ldots,j_{s_1}$th columns and the
$i_1,i_2,\ldots,i_{s_2}$th rows of $A$ are
relatively dense, respectively. Let $A_{dc}=(a_{j_1},a_{j_2},\ldots,a_{j_{s_1}})$,
where $a_{j_k}$ is the $j_k$th column of $A$ and
$\tilde{A}_{dc}=(\tilde{a}_{j_1},\tilde{a}_{j_2},\ldots,\tilde{a}_{j_{s_1}})$
is the sparsification of $A_{dc}$, each column $\tilde{a}_{j_k}$
of which is sparse and retains $p$ nonzero entries of $a_{j_k}$, where
$p=\lfloor \frac{nnz(A)}{n}\rfloor$ is the average number
of nonzero entries per column of $A$ with $nnz(A)$ the number of nonzero
entries of $A$.
Let $U_1=A_{dc}-\tilde{A}_{dc}=(u_1,u_2,\ldots,u_{s_1})$. Obviously,
the nonzero entries of $U_1$ are just those dropped ones of $A_{dc}$.
Define $\tilde{A}$ to be the matrix that replaces the
dense columns $a_{j_k}$ of $A$ by the sparse vectors
$\tilde{a}_{j_k}$, $k=1,2,\ldots,s_1$.
Then $\tilde{A}$ is column regular sparse and satisfies
\begin{equation}\label{changecol}
  A = \tilde{A} + U_{1}V_{1}^{T},
\end{equation}
where $V_{1}=(e_{j_{1}},e_{j_{2}},\ldots,e_{j_{s_1}})$ with $e_{j_{k}}$
the $j_{k}$th column of $I$. Obviously, $U_{1}V_{1}^{T}$ is of rank $s_1$.
Assume that $\tilde{A}$ and $I+V_{1}^{T}\tilde{A}^{-1}U_{1}$ are
nonsingular. By \eqref{shermanMOrrisonWoodbury}, we
have
\begin{equation}\label{invcol}
  A^{-1} = \tilde{A}^{-1} - \tilde{A}^{-1}U_{1}(I+V_{1}^{T}\tilde{A}^
  {-1}U_{1})^{-1}V_{1}^{T}\tilde{A}^{-1}.
\end{equation}

Clearly, the $i_1,i_2,\ldots,i_{s_2}$th rows of $\tilde{A}$  are still dense.
Next, we further transform $\tilde{A}$ into double regular sparse. Let
$\tilde{A}_{dr}=(\tilde{a}_{i_1},\tilde{a}_{i_2},\ldots,\tilde{a}_{i_{s_2}})^{T}$,
where $\tilde{a}_{i_k}^T$ is the $i_k$th dense row of $\tilde{A}$, $k=1,2,\ldots,s_2$.
Define $\hat{A}_{dr}=(\hat{a}_{i_1},\hat{a}_{i_2},\ldots,\hat{a}_{i_{s_2}})^{T}$ to be
the sparsification of $\tilde{A}_{dr}$, where each row $\hat{a}_{i_k}^{T}$ is
sparse and retains only $\tilde{p}$ nonzero entries of $\tilde{a}_{i_k}^{T}$,
where $\tilde{p}=\lfloor \frac{nnz(\tilde{A})}{n}\rfloor$ is the average number
of nonzero entries per row of $\tilde{A}$.
Let $V_{2}=\tilde{A}_{dr}^{T}-\hat{A}_{dr}^{T}=(v_1,v_2,\ldots,v_{s_2})$.
Then the nonzero entries of $V_2$ are just those dropped ones of $\tilde{A}_{dr}$.
Let $\hat{A}$ be the matrix that replaces the dense rows $\tilde{a}_{i_k}^T$ of
$\tilde{A}$  by the sparse vectors $\hat{a}_{i_k}^T$, $k=1,2,\ldots,s_2$.
Then $\hat{A}$ is double regular sparse and is related to $\tilde{A}$ by
\begin{equation}\label{changerow}
  \tilde{A} = \hat{A} + U_2V_2^T,
\end{equation}
where $U_2=(e_{i_1},e_{i_2},\ldots,e_{i_{s_2}})$ with $e_{i_k}$ the $i_k$th
column of $I$ and $U_{2}V_{2}^{T}$ is of rank $s_2$.

The combination of \eqref{changecol} and \eqref{changerow}
gives
\begin{equation}\label{transcr}
\hat{A}=A-U_1V_1^T-U_2V_2^T.
\end{equation}
As a result, we have transformed the double irregular sparse $A$ into
the double regular sparse $\hat{A}$, which
modifies $A$ with two low rank $s_1$ and $s_2$ matrices $U_1V_1^T$
and $U_2V_2^T$, respectively.

Assume that $\hat{A}$ and $I+V_{2}^{T}\hat{A}^{-1}U_{2}$ are
nonsingular. By \eqref{shermanMOrrisonWoodbury}, we have
\begin{equation}\label{invrow}
  \tilde{A}^{-1} = \hat{A}^{-1} - \hat{A}^{-1}U_{2}(I+V_{2}^{T}\hat{A}^
  {-1}U_{2})^{-1}V_{2}^{T}\hat{A}^{-1}.
\end{equation}
From \eqref{invcol}, the solution of \eqref{equation} is
\begin{equation}\label{solution}
  x=A^{-1}b= \tilde{A}^{-1}b - \tilde{A}^{-1}U_{1}(I+V_{1}^{T}\tilde{A}^
  {-1}U_{1})^{-1}V_{1}^{T}\tilde{A}^{-1}b,
\end{equation}
which requires to compute
$y=\tilde{A}^{-1}b$ and $W=\tilde{A}^{-1}U_{1}$. Substituting \eqref{invrow}
into \eqref{solution}, we obtain
\begin{equation}\label{solutiontildeb}
  y=\tilde{A}^{-1}b=\hat{A}^{-1}b - \hat{A}^{-1}U_{2}(I+V_{2}^{T}\hat{A}^
  {-1}U_{2})^{-1}V_{2}^{T}\hat{A}^{-1}b
\end{equation}
and
\begin{equation}\label{solutiontildeW}
  W=\tilde{A}^{-1}U_{1}=\hat{A}^{-1}U_{1} - \hat{A}^{-1}U_{2}(I+V_{2}^{T}\hat{A}^
  {-1}U_{2})^{-1}V_{2}^{T}\hat{A}^{-1}U_{1},
\end{equation}
which require to compute $\hat{A}^{-1}b$, $\hat{A}^{-1}U_1$ and $\hat{A}^{-1}U_2$.
It is now clear that
from \eqref{invrow}--\eqref{solutiontildeW} that the ultimate computation
of $x$ needs $\hat{A}^{-1}b$, $\hat{A}^{-1}U_1$ and $\hat{A}^{-1}U_2$ as well
as the inversions of the small $s_1\times s_1$ matrix $ I+V_{1}^{T}\tilde{A}^{-1}U_{1}$
and $s_2\times s_2$ matrix $ I+V_{2}^{T}\hat{A}^{-1}U_{2}$, whose costs are
negligible relative to the computation of
$\hat{A}^{-1}b$, $\hat{A}^{-1}U_1$ and $\hat{A}^{-1}U_2$.

Keep in mind $U_1=(u_1,u_2,\ldots,u_{s_1})$ and $U_2=(e_{i_1},e_{i_2},
\ldots,e_{i_{s_2}})$. Then the computation of
$\hat{A}^{-1}b$, $\hat{A}^{-1}U_1$ and $\hat{A}^{-1}U_2$
amounts to solving the following $s_1+s_2+1$ double
regular sparse linear systems:
\begin{eqnarray}
 \hat{A}z&=&b, \label{equation-b}\\
\hat{A}p&=&u_{k},k=1,2,\ldots,s_1, \label{equation-col}\\
\hat{A}q&=&e_{i_{k}}, k =1,2,\ldots,s_2. \label{equation-row}
\end{eqnarray}

\begin{algorithm}
\caption{Solving the double irregular sparse linear system \eqref{equation}}
\begin{algorithmic}[1]\label{Procedure1}
\STATE Find $s_1$ and $A_{dc}$, and sparsify $A_{dc}$ to
       obtain $\tilde{A}_{dc}$.
       Let $U_1=A_{dc}-\tilde{A}_{dc}=(u_1,u_2,\ldots,u_{s_1})$ and
              $V_1=(e_{j_1},e_{j_2},\ldots,e_{j_{s_1}})$, and
       define $\tilde{A}=A-U_1V_1^T$.

\STATE Find $s_2$ and $\tilde{A}_{dr}$, and sparsify
       $\tilde{A}_{dr}$ to obtain
       $\hat{A}_{dr}$. Let $V_2=\tilde{A}_{dr}^T-\hat{A}_{dr}^T=(v_1,v_2,\ldots,
       v_{s_2})$ and $U_2=(e_{i_1},e_{i_2},\ldots,e_{i_{s_2}})$,
       and define $\hat{A}=\tilde{A}-U_2V_2^T$.

\STATE Solve the $s_1+s_2+1$ linear systems \eqref{equation-b},
       \eqref{equation-col} and \eqref{equation-row}
       for $z,p_1,p_2,\ldots,p_{s_1}$ and $q_1,q_2,\ldots,q_{s_2}$,
       respectively. Let $\hat{A}^{-1}U_1=P=(p_1,p_2,\ldots,p_{s_1})$
       and $\hat{A}^{-1}U_2=Q=(q_1,q_2,\ldots,q_{s_2})$. Compute $y$ by
       \begin{equation}\label{alg-y}
         y=\tilde{A}^{-1}b = z- Q(I+V_2^TQ)^{-1}(V_2^Tz)
       \end{equation}
       and $W$ by
       \begin{equation}\label{alg-W}
         W = P- Q(I+V_2^TQ)^{-1}(V_2^TP).
       \end{equation}
\STATE Compute the solution $x$ of $Ax=b$ by
       \begin{equation}\label{alg-Ax}
         x = A^{-1}b = y- W(I+V_1^TW)^{-1}(V_1^Ty).
       \end{equation}
\end{algorithmic}
\end{algorithm}

With the above derivation, we can now summarize our transformation approach to
solving $Ax=b$ as Algorithm 1. In our context, its implementation consists
of four steps:
Firstly, we construct the double regular sparse $\hat{A}$; secondly,
we compute a sparse approximate inverse $M$ of
$\hat{A}$ by SPAI, PSAI($tol$) or RSAI($tol$) and use it as a preconditioner for
the $s_1+s_2+1$ double regular sparse linear systems; thirdly, we solve
the preconditioned systems approximately by Krylov solvers; finally, we recover the
solution of \eqref{equation} from the $s_1+s_2+1$ solutions of the
double regular sparse linear systems. The three procedures SPAI, PSAI($tol$) and
RSAI($tol$) are expected to be much more efficient than them
applied to $A$ directly. On the other hand, as shown in \cite{Jia13b,Jia09},
as preconditioners, the $M$ constructed by the three procedures
applied to $\hat{A}$ are at least as effective as the
corresponding ones applied to $A$.

Finally, it is particularly worthwhile
to stress that, as is known from \cite{benzi02,benzi99,Ferronato12},
that the CPU time of constructing $M$ overwhelms that of Kryolv solvers for the
preconditioned systems even in a parallel computing environment,
provided that $M$ is an effective preconditioner. This is a typical feature of
F-norm minimization based and factorized sparse approximate inverse
preconditioning procedures.
Nonetheless, as is pointed out in \cite{benzi02,benzi99,Ferronato12},
although $F$-norm minimization sparse approximate inverse preconditioning
procedures are more costly than incomplete LU (ILU)
type preconditioning procedures, they are more robust, stable and general
than the latter ones are. Also importantly, the action of $M$
is to only form matrix-vector products, which is substantially advantageous to
ILU's where at each iteration of Krylov solvers one must solve
an auxiliary linear system with the given preconditioner as the coefficient matrix.
Lastly, $F$-norm minimization based sparse approximate inverse preconditioning
procedures are naturally parallelizable and suit better for the linear systems with
the same coefficient matrix but multiple right-hand sides, whereas ILU
preconditioning is inherently sequential and has very limited parallelization.
In view of these, the overall performance of Algorithm 1 is expected to
very greatly outperform the standard approach
that first preconditions \eqref{equation}
by SPAI, PSAI($tol$) or RSAI($tol$) directly and then solves it by Krylov
solvers.

\section{Theoretical analysis and practical considerations}\label{sec:4}

In this section, we will give some theoretical analysis and
practical considerations on the proposed transformation approach. Within the
framework of Algorithm 1, we then
develop a practical iterative solver for \eqref{equation}.
To this end, we need to consider several issues.

First of all, we adapt the definition of column irregular sparse
in \cite{Jia13b} to double irregular sparse: A column or row is claimed to be
dense if
the number of nonzero entries in it exceeds $10p$ or $10\tilde{p}$,
where $p=\lfloor \frac{nnz(A)}{n}\rfloor$ and $\tilde{p}
=\lfloor \frac{nnz(\tilde{A})}{n}\rfloor$ are the
average numbers of nonzero entries per column of $A$ and that per row
of $\tilde{A}$, respectively. The second issue is which nonzero
entries in $A_{dc}$ and $\tilde{A}_{dr}$
should be dropped to generate $\tilde{A}$ and $\hat{A}$.
We deal with this issue as follows:
Suppose that all the diagonals of $A$ are nonzero. We then adopt the
same strategy as that in \cite{Jia13b} and retain the
diagonal and the $p-1$ or $\tilde{p}-1$ nonzero entries nearest to the diagonal
in each column of $A_{dc}$ or each row of $\tilde{A}_{dr}$.

As we have seen, when transforming the double irregular sparse \eqref{equation}
into the double regular sparse ones, we always assume that the column regular
sparse $\tilde{A}$ and the double regular sparse $\hat{A}$ are nonsingular.
Our third issue arise naturally: for which classes of matrices $A$, one can
ensure the nonsingularity of $\hat{A}$? Jia and Zhang \cite{Jia13b} have
investigated
this problem in some detail for the column regular sparse $\tilde{A}$
and established a number of results, which are directly adapted to our current
double regular sparse $\hat{A}$, as Theorem~\ref{the1} and Corollary~\ref{the2}
states.

\begin{theorem}\label{the1}
$\hat{A}$ obtained by the above transformation approach is nonsingular for
the following classes of matrices:\\
(\rmnum{1}) $A$ is strictly row or column diagonally dominant.\\
(\rmnum{2}) $A$ is irreducibly row or column diagonally dominant.\\
(\rmnum{3}) $A$ is an $M$-matrix.\\
(\rmnum{4}) $A$ is an $H$--matrix.
\end{theorem}

The above four classes of matrices have wide applications. Similar to
\cite{Jia13b}, we can extend the first two classes of matrices
in Theorem~\ref{the1} to more general forms.

\begin{corollary}\label{the2}
$\hat{A}$ obtained by the above transformation approach is nonsingular for
the following classes of matrices:\\
(\rmnum{1}) $AD$ or $DA$ is strictly row or column diagonally dominant
            where $D$ is an arbitrary nonsingular diagonal matrix.\\
(\rmnum{2}) $PAQ$ is strictly row or column diagonally dominant
            where $P$ and $Q$ are permutation matrices. \\
(\rmnum{3}) $(PAQ)D$ or $D(PAQ)$ is strictly row or column diagonally dominant
            where $D$ is an arbitrary nonsingular diagonal matrix and $P,Q$ are
            permutation matrices. \\
(\rmnum{4}) $A$ is an irreducible analogue of the matrices in
            (\rmnum{1})--(\rmnum{3}).
\end{corollary}

There should exist more classes of matrices for which the resulting
$\hat{A}$ are nonsingular. We do not pursue this topic further. Strikingly,
we will numerically find that
for a general nonsingular $A$ that does not belong to
the aforementioned classes of matrices, the resulting $\hat{A}$
is indeed nonsingular.
This fact has been extensively verified for column irregular sparse
matrices and column regular sparse $\tilde{A}$ \cite{Jia13b}.

The fourth issue is the conditioning of $\hat{A}$.
Theoretically, for a general nonsingular $A$, the conditioning
of $\hat{A}$ may become better or worse. However, when $A$ is a strictly row or
column diagonally dominant matrix or a $M$-matrix, Jia and Zhang \cite{Jia13b} have given
mathematical justifications on why the resulting column regular sparse $\tilde{A}$
is generally better conditioned than $A$. The same arguments works for these classes of
matrices in our current context, and $\hat{A}$ can be shown to be generally better
conditioned than $A$. Remarkably, later numerical experiments will indicate that the
resulting $\hat{A}$ is always and often considerably better conditioned
than a general double
irregular sparse $A$ which does not fall into the matrices in Theorem~\ref{the1}
and Corollary~\ref{the2}.

The above considerations on the third and fourth issues indicates
that our transformation approach is of generality in practical
applications.

Our final issue is the selection of stopping criteria for Krylov
solvers applied to the $s_1+s_2+1$ double regular sparse
linear systems for a given the prescribed accuracy $\varepsilon$
for \eqref{equation}. This selection is crucial for reliably recovering an
approximate solution of \eqref{equation} with the prescribed accuracy $\varepsilon$.
In order to solve this problem, we present the following theorem, based on which
we can design reliable stopping criteria for the double regular sparse linear systems.

\begin{theorem}\label{theorem-stoppingcriteria}
For $U_1, U_2, V_1$ and $V_2$ defined in Algorithm 1,
let $\check{z}$, $\check{p}_{j}$, $j=1,2,\ldots,s_1$ and $\check{q}_{j}$,
$j=1,2,\ldots,s_2$, be the approximate solutions of \eqref{equation-b},
\eqref{equation-col} and \eqref{equation-row}, respectively.
Define $\check{P}=(\check{p}_{1},\check{p}_{2},\ldots,\check{p}_{s_1})$,
$\check{Q}=(\check{q}_{1},\check{q}_{2},\ldots,\check{q}_{s_2})$ and
the residuals $r_{\check{z}}=b-\hat{A}\check{z}$,
$r_{\check{p}_{j}}=u_{j}-\hat{A}\check{p}_{j}$,
$r_{\check{q}_{j}}=e_{i_{j}}-\hat{A}\check{q}_{j}$,
assume that $I+V_{2}^{T}\check{Q}$ is nonsingular, and define
$c_{1}=\|(I+V_{2}^{T}\check{Q})^{-1}V_{2}^{T}\check{z}\|$ and
$c_{2}=\|(I+V_{2}^{T}\check{Q})^{-1}V_{2}^{T}\check{P}\|$, and
take
\begin{equation}\label{app-y}
\check{y}=\check{z}-\check{Q}(I+V_{2}^{T}\check{Q})^{-1}V_{2}^{T}\check{z}
\end{equation}
and
\begin{equation}\label{app-W}
\check{W}=\check{P}-\check{Q}(I+V_{2}^{T}\check{Q})^{-1}V_{2}^{T}\check{P}
\end{equation}
to be the approximations of $y$ and $W$ defined by \eqref{solutiontildeb}
and \eqref{solutiontildeW}, respectively.
Assume that $I+V_{1}^{T}\check{W}$ is nonsingular, and
define $c_{0}=\|(I+V_{1}^{T}\check{W})^{-1}V_{1}^{T}\check{y}\|$ and
\begin{equation}\label{app-x}
\check{x}=\check{y}-\check{W}(I+V_{1}^{T}\check{W})^{-1}V_{1}^{T}\check{y}
\end{equation}
to be an approximate solution of $Ax=b$. Then if
\begin{equation}\label{epsy}
  \frac{\|r_{\check{z}}\|}{\|b\|}\leq\frac{\varepsilon}{4},
\end{equation}
\begin{equation}\label{epscol}
  \frac{\|r_{\check{p}_{j}}\|}{\|u_{j}\|}\leq\frac{\|b\|}{4\sqrt{s_1}c_{0}
  \|u_{j}\|}\varepsilon, j=1,2,\ldots,s_1
\end{equation}
and
\begin{equation}\label{epsrow}
  \frac{\|r_{\check{q}_{j}}\|}{\|e_{i_{j}}\|}\leq
  \frac{\|b\|}{2\sqrt{s_2}(c_{0}c_{2}+c_{1})}
  \varepsilon, j=1,2,\ldots,s_2,
\end{equation}
we have
\begin{equation}\label{epsx}
  \frac{\|r\|}{\|b\|}= \frac{\|b-A\check{x}\|}{\|b\|}
  \leq\varepsilon.
\end{equation}
\end{theorem}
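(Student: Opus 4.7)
The plan is to express the final residual $r = b - A\check{x}$ as an explicit linear combination of the three computed residuals $r_{\check{z}}$, $R_{\check{P}} := (r_{\check{p}_1},\ldots,r_{\check{p}_{s_1}})$, and $R_{\check{Q}} := (r_{\check{q}_1},\ldots,r_{\check{q}_{s_2}})$, then bound its norm by the triangle inequality. The key algebraic fact that repeatedly simplifies the intermediate expressions is the Woodbury-style identity $I - X(I+X)^{-1} = (I+X)^{-1}$, which I will apply with $X = V_2^T\check{Q}$ and $X = V_1^T\check{W}$.

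First I would compute the ``intermediate residuals'' $b - \tilde{A}\check{y}$ and $U_1 - \tilde{A}\check{W}$. Writing $\tilde{A} = \hat{A} + U_2 V_2^T$ and using $\hat{A}\check{z} = b - r_{\check{z}}$, $\hat{A}\check{Q} = U_2 - R_{\check{Q}}$, $\hat{A}\check{P} = U_1 - R_{\check{P}}$, together with the identity $V_2^T\check{y} = (I + V_2^T\check{Q})^{-1} V_2^T\check{z}$ (and the analogue with $\check{z}$ replaced by $\check{P}$), the $U_2$-terms cancel exactly and one obtains
\begin{equation*}
b - \tilde{A}\check{y} = r_{\check{z}} - R_{\check{Q}}(I+V_2^T\check{Q})^{-1}V_2^T\check{z}, \qquad
U_1 - \tilde{A}\check{W} = R_{\check{P}} - R_{\check{Q}}(I+V_2^T\check{Q})^{-1}V_2^T\check{P}.
\end{equation*}

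Next I would repeat the same manoeuvre one level up, using $A = \tilde{A} + U_1 V_1^T$ and $V_1^T\check{x} = (I+V_1^T\check{W})^{-1}V_1^T\check{y} =: \beta$, so that $\|\beta\| = c_0$. Again the $U_1$-terms cancel, yielding
\begin{equation*}
r = b - A\check{x} = r_{\check{z}} - R_{\check{Q}}(I+V_2^T\check{Q})^{-1}V_2^T\check{z} - R_{\check{P}}\beta + R_{\check{Q}}(I+V_2^T\check{Q})^{-1}V_2^T\check{P}\,\beta.
\end{equation*}
Taking norms and using $\|R_{\check{P}}\| \le \|R_{\check{P}}\|_F \le \sqrt{s_1}\max_j\|r_{\check{p}_j}\|$ and the analogous bound for $R_{\check{Q}}$, I would obtain
\begin{equation*}
\|r\| \le \|r_{\check{z}}\| + c_0 \sqrt{s_1}\max_j\|r_{\check{p}_j}\| + (c_0 c_2 + c_1)\sqrt{s_2}\max_j\|r_{\check{q}_j}\|.
\end{equation*}
Finally, substituting the hypotheses \eqref{epsy}, \eqref{epscol}, \eqref{epsrow} gives three pieces bounded by $\tfrac{\varepsilon}{4}\|b\|$, $\tfrac{\varepsilon}{4}\|b\|$, and $\tfrac{\varepsilon}{2}\|b\|$ respectively, summing to $\varepsilon\|b\|$, which is \eqref{epsx}.

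The only delicate step is the first one: carrying out the cancellations so that only the residual matrices $r_{\check z}$, $R_{\check P}$, $R_{\check Q}$ (and no ``exact'' quantities like $U_1,U_2$) survive in the final expression for $r$. Once this clean identity is in hand, the norm estimate and the splitting $\tfrac14 + \tfrac14 + \tfrac12 = 1$ of $\varepsilon$ among the three residual sources are routine; the factors $\tfrac{1}{\sqrt{s_1}}$ and $\tfrac{1}{\sqrt{s_2}}$ on the right-hand sides of \eqref{epscol} and \eqref{epsrow} are precisely tailored so that the Frobenius-norm bookkeeping closes up.
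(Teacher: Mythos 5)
Your proposal is correct and follows essentially the same route as the paper's proof: both express $r=b-A\check{x}$ via the two-level residual identities $b-\tilde{A}\check{y}=r_{\check{z}}-R_{\check{Q}}(I+V_2^T\check{Q})^{-1}V_2^T\check{z}$ and $U_1-\tilde{A}\check{W}=R_{\check{P}}-R_{\check{Q}}(I+V_2^T\check{Q})^{-1}V_2^T\check{P}$, then bound $\|r\|\leq\|r_{\check{z}}\|+c_0\|R_{\check{P}}\|_F+(c_0c_2+c_1)\|R_{\check{Q}}\|_F$ and split $\varepsilon$ as $\tfrac14+\tfrac14+\tfrac12$. The only cosmetic difference is that you assemble one explicit formula for $r$ while the paper keeps the intermediate quantities $r_{\check y}$ and $R_{\check W}$ and substitutes afterwards; the cancellations and the Frobenius-norm bookkeeping are the same.
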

\begin{proof}
Define $R_{\check{W}}=U_{1}-\tilde{A}\check{W}$ and
$r_{\check{y}}=b-\tilde{A}\check{y}$.
From \eqref{changecol}, we obtain
\begin{align}\label{relationr}
  r&=b-A\check{x}=b-(\tilde{A}+U_{1}V_{1}^{T})(\check{y}-
    \check{W}(I+V_{1}^{T}\check{W})^{-1}V_{1}^{T}\check{y}) \notag \\
   &=b-\tilde{A}\check{y}-U_{1}V_{1}^{T}\check{y}+\tilde{A}
   \check{W}(I+V_{1}^{T}\check{W})^{-1}V_{1}^{T}\check{y}+U_{1}
   V_{1}^{T}\check{W}(I+V_{1}^{T}\check{W})^{-1}V_{1}^{T}\check{y} \notag \\
   &=r_{\check{y}}-U_{1}[(I+V_{1}^{T}\check{W})-V_{1}^{T}\check{W}]
   (I+V_{1}^{T}\check{W})^{-1}V_{1}^{T}\check{y}+\tilde{A}\check{W}
   (I+V_{1}^{T}\check{W})^{-1}V_{1}^{T}\check{y}  \notag \\
   &=r_{\check{y}}-U_{1}(I+V_{1}^{T}\check{W})^{-1}V_{1}^{T}\check{y}
   +\tilde{A}\check{W}(I+V_{1}^{T}\check{W})^{-1}V_{1}^{T}\check{y} \notag \\
   &=r_{\check{y}}-R_{\check{W}}(I+V_{1}^{T}\check{W})^{-1}V_{1}^{T}\check{y}.
\end{align}


Define $R_{\check{P}}=U_{1}-\hat{A}\check{P}$ and
$R_{\check{Q}}=U_{2}-\hat{A}\check{Q}$. From \eqref{app-y} and \eqref{app-W}
we obtain
\begin{equation}\label{relationry}
  r_{\check{y}}=r_{\check{z}}-R_{\check{Q}}(I+V_{2}^{T}
  \check{Q})^{-1}V_{2}^{T}\check{z}
\end{equation}
and
\begin{equation}\label{relationrw}
  R_{\check{W}}=R_{\check{P}}-R_{\check{Q}}(I+V_{2}^{T}
  \check{Q})^{-1}V_{2}^{T}\check{P},
\end{equation}
respectively. From \eqref{relationr}, \eqref{relationry} and
\eqref{relationrw}, by the definitions of $c_{0}$, $c_{1}$ and
$c_{2}$ we have
\begin{align}
  \|r\| & \leq\|r_{\check{y}}\|+c_{0}\|R_{\check{W}}\|
        \leq\|r_{\check{z}}\|+c_{1}\|R_{\check{Q}}\|+c_{0}
             (\|R_{\check{P}}\|+c_{2}\|R_{\check{Q}}\|) \notag\\
        &\leq\|r_{\check{z}}\|+c_{0}\|R_{\check{P}}\|_{F}+
             (c_{0}c_{2}+c_{1})\|R_{\check{Q}}\|_{F}. \label{norm-r}
\end{align}
Noting that $R_{\check{P}}(:,j)=r_{\check{p}_{j}}$ and
$R_{\check{Q}}(:,j)=r_{\check{q}_{j}}$, we obtain
$\|R_{\check{P}}\|_{F}=\sqrt{\sum_{j=1}^{s_1}\|r_{\check{p}_{j}}\|^{2}}$
and $\|R_{\check{Q}}\|_{F}=\sqrt{\sum_{j=1}^{s_2}\|r_{\check{q}_{j}}\|^{2}}$.
It is then easy to verify that if \eqref{epsy}, \eqref{epscol} and \eqref{epsrow}
are fulfilled then \eqref{epsx} follows from \eqref{norm-r}.
\end{proof}

Though Theorem~\ref{theorem-stoppingcriteria} gives
the stopping criteria for the double regular sparse problems,
they are not directly applicable for the prescribed accuracy
$\varepsilon$ of \eqref{equation}. The reason is that
$c_{0}$, $c_{1}$ and $c_{2}$ can be computed only until iterations for
\eqref{equation-b}, \eqref{equation-col} and \eqref{equation-row}
terminate, but the stopping criteria \eqref{epscol} and \eqref{epsrow}
for \eqref{equation-col} and \eqref{equation-row} depend on  $c_{0}$, $c_{1}$
and $c_{2}$. As a result, the computation of $c_{0}$, $c_{1}$ and $c_{2}$ and
the termination of iterative solvers interact, and cannot be done
in advance. Fortunately, it appears that the accurate estimates of
$c_{0}$, $c_{1}$ and $c_{2}$ are unnecessary and quite rough ones are enough.
It is seen that $c_{0}$ is moderate if $I+V_{1}^{T}\check{W}$ is not ill
conditioned. Suppose that $I+V_{1}^{T}\check{W}$ and $I+V_{2}^{T}\check{Q}$
are not very ill conditioned. We observe that $c_{1}$ and $c_{2}$ rely on the
norm of $V_2$ and $c_0$ depends on the norm of $V_1$.
In implementations, we simply take $c_{0}=1$ and $c_{1}=c_{2}=
\max_{1\leq i\leq s_2}\|V_2(:,i)\|$.
In numerical experiments, we will find that such choices of $c_{0}$, $c_{1}$ and
$c_{2}$ work reliably and makes \eqref{epsx} hold for all the test problems.

Having done the above, we have finally developed Algorithm 1 into a truly working
algorithm, called Algorithm 2 hereafter.

\section{Numerical experiments}\label{sec:5}

In this section, we test Algorithm 2 on ten real-world double irregular
sparse nonsymmetric problems listed in Table~\ref{table-mtr}, which are
from \cite{davis11}. We first construct sparse
approximate inverses $M$ of double regular sparse matrices $\hat{A}$
by SPAI, PSAI($tol$) or RSAI($tol$) and then solve the resulting
$s_1+s_2+1$ preconditioned double regular sparse linear systems by the Krylov
solver BiCGStab, whose code {\sf bicgstab.m} is from Matlab 7.8.0.
We will compare Algorithm 2 with the standard approach that preconditions
\eqref{equation} by SPAI, PSAI($tol$) or RSAI($tol$) directly and solves
the preconditioned double irregular sparse system by BiCGStab.
Depending on which of SPAI, PSAI($tol$) and RSAI($tol$) is used, Algorithm 2
gives rise to three algorithms, named New-SPAI, New-PSAI($tol$) and New-RSAI($tol$),
abbreviated as N-SPAI, N-PSAI($tol$) and N-RSAI($tol$), respectively.
Similarly, we denote by S-SPAI, S-PSAI($tol$), and S-RSAI($tol$) the algorithms
that directly precondition \eqref{equation} by SPAI, PSAI($tol$) and RSAI($tol$),
respectively.

We will make numerous comparisons. Most importantly,
in terms of the CPU time, we shall
demonstrate that N-SPAI, N-PSAI($tol$) and N-RSAI($tol$)
outperform S-SPAI, S-PSAI($tol$) and S-PSAI($tol$) very greatly, respectively.
Particularly, we will show that S-SPAI, S-PSAI($tol$) and S-PSAI($tol$)
fail to compute preconditioners $M$ for the last six larger ones of the ten
test problems because of huge computational cost or memory storage,
but N-SPAI, N-PSAI($tol$) and N-RSAI($tol$) work very well.

\begin{table}[!htb]
\centering
\footnotesize
\caption{The description of test matrices}\label{table-mtr}
\begin{tabular}{cccc}\hline
matrices&$n$&$nnz(A)$&Description\\
\hline
rajat04&1,041&8,725&circuit simulation problem\\
rajat12&1,879&12,818&circuit simulation problem\\
rajat13&7,598&48,762&circuit simulation problem\\
memplus&17,758&99,147&computer component design memory circuit\\
ASIC\_100k&9,9340&940,621&Sandia, Xyce circuit simulation matrix\\
dc1&116,835&766,396&circuit simulation problem\\
dc2&116,835&766,396&circuit simulation problem\\
dc3&116,835&766,396&circuit simulation problem\\
trans4&116,835&749,800&circuit simulation problem\\
trans5&116,835&749,800&circuit simulation problem\\
\hline
\end{tabular}
\end{table}

We perform numerical experiments on an Intel Core 2
Quad CPU E8400@ 3.00GHz with 2GB RAM using Matlab 7.8.0
with the machine precision $\epsilon_{\rm mach}=2.22\times10^{-16}$ under
the Linux operating system. We use the SPAI 3.2 package \cite{spai}
for the SPAI algorithm, which is written in C/MPI and is an optimized code in
some sense.  PSAI($tol$) and RSAI($tol$) are experimental Matlab codes in
the sequential environment.
We take the initial sparsity pattern as that of $I$ for SPAI and RSAI($tol$).
We apply row Dulmage-Mendelsohn permutations to the matrices
having zero diagonals so as to make their diagonals nonzero \cite{duff}.
The related Matlab command is $j={\sf demperm}(A(j,:))$, which is applied
to rajat04, rajat12, rajat13 and ASIC\_100k.
We take $c_0$, $c_1$ and $c_2$ as the values defined in the end of
Section~\ref{sec:4}, and stop BiCGStab when \eqref{epsy}, \eqref{epscol} and
\eqref{epsrow} are satisfied with $\varepsilon=10^{-8}$ or 1,000 iterations
are used. The initial guess of the solution to each problem is always $x_{0}=0$
and the right-hand side $b$ is formed by choosing the solution
$x=(1,1,\ldots,1)^{T}$. We have found that BiCGStab without preconditioning
does not converge for all the test problems in Table~\ref{table-mtr}
within 1,000 iterations except for rajat04, rajat12 and rajat13.
With $\check{x}$ defined by \eqref{app-x}, we compute the actual relative
residual norm
\begin{equation}\label{relativerenorm}
  r_{actual}=\frac{\|b-A \check{x}\|}{\|b\|}
\end{equation}
and compare it with the required accuracy $\varepsilon=10^{-8}$.

In Table~\ref{table-mtrvs}, we give some information on $A$
and $\hat{A}$ constructed by Algorithm 2. It is observed that
all the test matrices have some almost fully dense columns and rows
except rajat04, rajat12, rajat13 and memplus.
It is also seen from the table that $s_1$ and $s_2$ are very small relative
to $n$, as they must be. Having sparsified those dense columns and rows,
we find that the number of nonzero entries in $\hat{A}$ are considerably
smaller than those in $A$. We remark that all the test matrices do not belong
to the classes of matrices in Theorem~\ref{the1}, but the table shows that
all the $\hat{A}$ are always better and can be
better conditioned than the corresponding $A$ by one to nearly three orders,
as the condition numbers
$\kappa(A)$ and $\kappa(\hat{A})$ indicate clearly. This
demonstrates that our transformation of $A$ into $\hat{A}$ is of practical
generality that ensures not only the non-singularity of $\hat{A}$ but also
improves the conditioning of the double regular sparse linear systems.

\begin{table}[!htb]
\centering
\footnotesize
\tabcolsep 4pt
\caption{\label{table-mtrvs}\small
{Some information on $A$ and $\hat{A}$, where
$s_1$ and $s_2$ denote the numbers of irregular columns and rows of $A$,
respectively, $p=\lfloor \frac{nnz(A)}{n}\rfloor$, $p_{dc}$ and $p_{dr}$
the numbers of nonzero entries in the densest column of $A$ and
the densest row of $\tilde{A}$,
$\nu=\max_{1\leq i\leq s_2}\|V_2(:,i)\|$, and the condition number
$\kappa(A)=\|A\|\|A^{-1}\|$.
The Matlab function {\sf condest.m} is used to estimate the 1-norm
condition numbers of the last six larger matrices.}}
\begin{tabular}{|c|c|c|c|c|c|c|c|c|c|}\hline
matrices&$s_1$&$s_2$&$p$&$p_{dc}$&$p_{dr}$
&$nnz(\hat{A})$&$\nu$&$\kappa(A)$&$\kappa(\hat{A})$\\
\hline
rajat04&5&6&8&642&659&5702
   &$6.33\times 10^{3}$&$1.64\times 10^{8}$&$6.85\times 10^{7}$\\
\hline
rajat12&9&9&6&1,195&1,190&6,803
   &$2.26\times 10^{2}$&$6.91\times 10^{5}$&$6.36\times 10^{5}$\\
\hline
rajat13&29&29&6&5,412&5,383&27,232
   &$3.08\times 10^{3}$&$1.19\times 10^{11}$&$3.59\times 10^{8}$\\
\hline
memplus&144&124&6&353&319&67,649
   &$1.45\times 10^{-1}$&$1.29\times 10^{5}$&$1.24\times 10^{5}$\\
\hline
ASIC\_100k&132&129&9&92,258&92,165&543,876
   &$3.93\times 10^{1}$&$1.46\times 10^{11}$&$9.27\times 10^{9}$\\
\hline
dc1&55&54&6&114,174&114,184&425,819
   &$5.59\times 10^{4}$&$1.01\times 10^{10}$&$2.21\times 10^{8}$\\
\hline
dc2&55&54&6&114,174&114,184&425,819
   &$5.74\times 10^{4}$&$8.86\times 10^{9}$&$5.89\times 10^{7}$\\
\hline
dc3&55&54&6&114,174&114,184&425,819
   &$6.00\times 10^{4}$&$1.16\times 10^{10}$&$1.53\times 10^{8}$\\
\hline
trans4&55&54&6&114,174&114,184&425,819
   &$1.91\times 10^{4}$&$3.30\times 10^{9}$&$3.46\times 10^{8}$\\
\hline
trans5&55&54&6&114,174&114,184&425,819
   &$6.01\times 10^{3}$&$2.32\times 10^{9}$&$6.54\times 10^{7}$\\
\hline
\end{tabular}
\end{table}

\begin{figure}[!htb]
\includegraphics{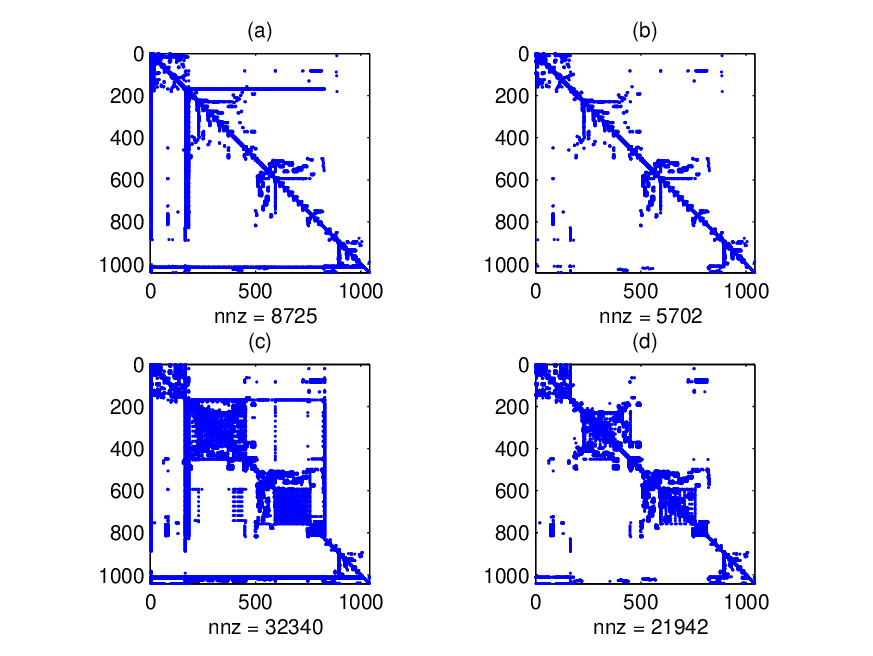}
\caption{(a): the sparsity pattern of $A$;
(b): the sparsity pattern of $\hat{A}$;
(c): the sparsity pattern of the sparsified $A^{-1}$;
(d): the sparsity pattern of the sparsified $\hat{A}^{-1}$.}\label{figure1}
\end{figure}

Now we look into the effective approximate sparsity pattern of $A^{-1}$ and
that of $\hat{A}^{-1}$ obtained by our transformation approach.
We aim to show that effective sparse approximate inverses of double
irregular and regular sparse matrices are structure preserving,
though theoretically good approximate inverses of
a double regular sparse matrix may be irregular sparse \cite{Jia13b}.
We take rajat04 as an example. Performing a row
Dulmage-Mendelsohn permutation on it, we depict the sparsity patterns of
$A$ and $\hat{A}$. We first use the Matlab function {\sf inv} to compute
$A^{-1}$ and $\hat{A}^{-1}$ accurately and then drop their nonzero entries
whose magnitudes fall below $10^{-3}$ so as to obtain good sparse approximate
inverses of $A$ and $\hat{A}$. Figure~\ref{figure1} depicts
the patterns of $A$, $\hat{A}$, and the sparsified $A^{-1}$ and
$\hat{A}^{-1}$. Clearly, the generated good approximate inverse of $A$
is sparse, but it is double irregular sparse, whose numbers of irregular columns
and rows are no less than those of $A$. In contrast,
for the double regular sparse $\hat{A}$, the generated
good sparse approximate inverse of
it is not only double regular sparse but also considerably sparser than that
of $A$. These observations imply that SPAI is not only costly but also
cannot construct effective preconditioners $M$ of \eqref{equation} since
the $M$ by SPAI are column regular sparse unless the loops $l_{\max}$ are allowed
to be very big, which is prohibited for SPAI. PSAI($tol$) and RSAI($tol$) are
also costly. In contrast, since good sparse approximate
inverses of $\hat{A}$ are generally double regular sparse, it is expected
that SPAI, PSAI($tol$) and RSAI($tol$) construct good approximate inverses $M$ of
$\hat{A}$ much more efficiently than they do for $A$.

In the later tables, we denote by $spar=\frac{nnz(M)}{nnz(A)}$ or
$\frac{nnz(M)}{nnz(\hat{A})}$
the sparsity of $M$ relative to $A$ or $\hat{A}$, by
$n_{c}$ the number of columns of $M$ whose
residual norms do not drop below the prescribed accuracy
$\eta$, by $ptime$ and $stime$ the CPU time (in seconds) of constructing
$M$ and that of solving the preconditioned
linear systems by BiCGStab, respectively. The notation $\dag$ means that we do
not count CPU time when BiCGStab fails to converge within 1000 iterations.
Let the actual relative residual
norm $r_{actual}=a\cdot\varepsilon$. Then $a<1$ means that our
choices of $c_0,c_1$ and $c_2$ work reliably and
relative residual norm
\eqref{relativerenorm} satisfies the prescribed accuracy $\varepsilon$.

\subsection{S-SPAI and N-SPAI}\label{sec:5.1}

We will show that N-SPAI is much more efficient
than S-SPAI with the same parameters used in SPAI, in which the
initial pattern of $M$ is that of $I$, and we take $\eta=0.4$, $l_{\max}=20$
and add five most profitable indices to the pattern of $m_k$ at each loop
for $k=1,2,\ldots,n$. The number of nonzero entries
in $m_k$ is therefore no more than $1+5\times 20 =101$ for
the given parameters. Consequently, if
good preconditioners have at least one column whose number of
nonzero entries is bigger than 101, SPAI may be ineffective
for preconditioning \eqref{equation}.
Table~\ref{table-spai} shows the results, where the notation $\ast$
indicates that S-SPAI could not construct $M$ when 100 hours are
consumed, and the notation $iter$ stands for the numbers of iterations
and maximum iterations that BiCGStab uses for \eqref{equation} and
the $s_1+s_2+1$ systems \eqref{equation-b}, \eqref{equation-col}
and \eqref{equation-row}, respectively.

\begin{table}[!htb]
\centering
\footnotesize
\tabcolsep 5pt
\caption{\label{table-spai}S-SPAI versus N-SPAI}
\begin{tabular}{|c|c|c|c|c|c|c|c|c|c|c|c|c|}\hline
&\multicolumn{6}{|c|}{S-SPAI}&\multicolumn{6}{|c|}
{N-SPAI}\\
\cline{2-7}\cline{8-13}
matrices&$spar$&$ptime$&$n_{c}$&$a$&$iter$&$stime$&$spar$&$ptime$&$n_{c}$
&$a$&$iter$&$stime$\\
\hline
rajat04&0.37&1.18&6&0.33&30&0.14&0.39&0.05&2&0.27&15&0.07\\
\hline
rajat12&0.89&3.19&3&0.58&46&0.04&0.81&0.03&0&0.35&38&0.25\\
\hline
rajat13&0.92&271.2&6&0.91&73&0.19&1.16&0.27&2&0.12&8&0.35\\
\hline
memplus&1.05&13.6&0&0.70&92&0.40&1.35&0.48&0&0.50&23&4.06\\
\hline
ASIC\_100k&$\ast$&$\ast$&$\ast$&$\ast$&$\ast$&$\ast$&0.55&4.36&12&0.35&9&27.0\\
\hline
dc1&$\ast$&$\ast$&$\ast$&$\ast$&$\ast$&$\ast$&1.10&8.80&6&0.31&314&95.8\\
\hline
dc2&$\ast$&$\ast$&$\ast$&$\ast$&$\ast$&$\ast$&1.04&7.70&0&0.40&107&51.6\\
\hline
dc3&$\ast$&$\ast$&$\ast$&$\ast$&$\ast$&$\ast$&1.06&8.01&6&0.28&81&55.8\\
\hline
trans4&$\ast$&$\ast$&$\ast$&$\ast$&$\ast$&$\ast$&1.14&8.05&0&0.26&37&10.0\\
\hline
trans5&$\ast$&$\ast$&$\ast$&$\ast$&$\ast$&$\ast$&1.13&9.00&0&0.47&79&20.1\\
\hline
\end{tabular}
\end{table}

From Table~\ref{table-spai}, we find that all the $a<1$ for all the test problems.
This indicates that our choices of $c_0,c_1$ and $c_2$ are
reliable in practice. For the last six larger problems S-SPAI could
not construct $M$ within 100 hours but N-SPAI does the job in no more than
nine seconds, and Algorithm 2 solves all the preconditioned double regular
sparse linear systems with the total CPU time ($=ptime+stime$) between
$18.1\sim 104.6$ seconds, very dramatic
improvements over the standard approach!
The reason is that each of these matrices contains
some fully dense columns and rows, so that SPAI spends unaffordable time
in finding most profitable indices because of the large cardinalities
of $\hat{\mathcal{J}}_{k}$ and $\mathcal{N}_{k}$ for
$k=1,2,\ldots,n$ at each loop. Precisely, for a double irregular sparse
matrix whose irregular column and row are fully dense,  at each loop
$l$, SPAI has to compute almost $n$ numbers $\rho_j$ by
\eqref{2normofresidual}, then sort almost $n$ indices in
$\hat{\mathcal{J}}_{k}$ by comparing the sizes of
$\rho_j$ and finally pick up a few most profitable indices
among them. This is a huge computational task and makes SPAI fatally slow.
In contrast, N-SPAI overcomes this drawback very well since
there are only a very small number of elements in $\hat{\mathcal{J}}_{k}$ and
N-SPAI only needs to compute the same number of $\rho_j$ and select the
most profitable indices. This is why N-SPAI outperforms
S-SPAI so dramatically. For each of the smaller rajat04, rajat12,
rajat13 and the relatively large memplus whose irregular rows and columns
are, though relatively dense, far from fully dense, we observe
that the $ptime$ by N-SPAI is still
much smaller than the corresponding one by S-SPAI and
the CPU time is reduced by tens to one thousand of times.
So, even for $A$ that does not have very dense irregular columns and rows,
N-SPAI exhibits its much higher efficiency than S-SPAI does.

We next look at the preconditioning effectiveness of $M$.
We observe that the $n_{c}$ in S-SPAI are bigger than those counterparts
in N-SPAI for rajat04, rajat12, rajat13 and memplus.
This demonstrates that SPAI is difficult
to capture a good approximate sparsity pattern of $A^{-1}$ when $A$
is double irregular sparse. This is also confirmed by the numbers
$iter$'s, which show that BiCGStab converges much
faster for the the double regular sparse problems than
for \eqref{equation}, that is, SPAI is considerably less effective for
preconditioning double irregular sparse linear systems than it is
for double regular sparse ones.

Finally, we compare our transformation approach with that in \cite{Jia13b}, where
double irregular sparse matrices are only transformed into column regular sparse
ones, which are still row irregular sparse for our test matrices.
Therefore, SPAI is still time consuming, though its efficiency is improved
greatly relative to its application to $A$ directly. We mention
that for numerical experiments we use the same computer as that in \cite{Jia13b}.
Precisely, for the last six larger matrices, the approach used in \cite{Jia13b}
takes about half an hour to two hours to construct the $M$, while our approach
here only costs about eight seconds, improvements of hundreds of times!

\subsection{S-PSAI($tol$) and N-PSAI($tol$)}\label{sec:5.3}

We will illustrate that N-PSAI($tol$) is much more efficient than
S-PSAI($tol$), where we take $\eta=0.4$ and $l_{\max}=10$.
Table~\ref{table-psai} shows the results, where the notation $-$ indicates that
our computer is out of memory when constructing $M$ due to the appearance
of large sized LS problems.

\begin{table}[!htb]
\tabcolsep 5pt
\centering
\footnotesize
\caption{\label{table-psai}S-PSAI($tol$) versus N-PSAI($tol$)}
\begin{tabular}{|c|c|c|c|c|c|c|c|c|c|c|c|c|}\hline
&\multicolumn{6}{|c|}{S-PSAI($tol$)}&\multicolumn{6}{|c|}
{N-PSAI($tol$)}\\
\cline{2-7}\cline{8-13}
matrices&$spar$&$ptime$&$n_{c}$&$a$&$iter$&$stime$&$spar$&$ptime$&$n_{c}$
&$a$&$iter$&$stime$\\
\hline
rajat04&0.72&1.40&0&0.79&11&0.01&0.39&0.18&0&0.15&13&0.08\\
\hline
rajat12&2.22&3.48&0&0.74&32&0.03&2.11&0.80&0&0.28&37&0.19\\
\hline
rajat13&1.36&204&0&0.19&4&0.07&0.91&0.17&0&0.29&6&0.27\\
\hline
memplus&6.25&624&0&0.98&215&1.30&1.78&48.6&0&0.35&27&4.08\\
\hline
ASIC\_100k&$-$&$-$&$-$&$-$&$-$&$-$&0.43&582&0&0.34&8&27.7\\
\hline
dc1&$-$&$-$&$-$&$-$&$-$&$-$&1.67&1340&0&0.26&379&101\\
\hline
dc2&$-$&$-$&$-$&$-$&$-$&$-$&1.71&1295&0&0.36&58&48.3\\
\hline
dc3&$-$&$-$&$-$&$-$&$-$&$-$&1.73&1299&0&0.47&54&44.0\\
\hline
trans4&$-$&$-$&$-$&$-$&$-$&$-$&1.80&1679&0&0.26&28&10.9\\
\hline
trans5&$-$&$-$&$-$&$-$&$-$&$-$&1.62&1475&0&0.73&60&17.8\\
\hline
\end{tabular}
\end{table}

From the table, we observe that all the $a<1$ for all the test problems.
This indicates that our choices of $c_0,c_1$ and $c_2$ are reliable.
Clearly, the table tells us that for the last six larger problems
S-PSAI($tol$) could not construct $M$. This is because
that each of these $A$ has fully dense columns, which lead to some large
LS problems \eqref{reducemin2norm} with dimensions $p_{dc}\approx n$
and generate some fully dense $m_k$ before dropping small nonzero entries
in them; see \cite{Jia13b,Jia09} for details.
So, for $n$ large, PSAI($tol$) faces a severe difficulty when applied to
column irregular sparse matrices. In contrast, with
the same parameters, N-PSAI($tol$) overcomes
this difficulty very well and computes effective preconditioners $M$ efficiently
for the last six larger problems. By comparison, for rajat04, rajat12, rajat13
and memplus, N-PSAI($tol$) can construct the $M$ several or many times
faster than S-PSAI($tol$).
However, unlike SPAI, we observe all $n_c=0$, indicating that both S-PSAI($tol$) and
N-PSAI($tol$) succeed in finding effective sparse approximate inverses,
which confirms the theory that
PSAI($tol$) can capture effective approximate sparsity patterns of the inverse of
a sparse matrix, independent of whether the matrix is regular or irregular sparse
\cite{Jia13b,Jia09}.

Next, we compare our transformation approach with that used
in \cite{Jia13b}, which only transforms double irregular sparse $A$ into column
regular sparse ones. We point out that good sparse approximate inverses of a row
irregular sparse matrix may have some relative dense columns;
see Figure 5.2 of \cite{Jia13b} for rajat04. In this case, PSAI($tol$) needs to
solve some LS problems whose sizes are not as small as those
for double regular sparse matrices, which causes
PSAI($tol$) to be considerably more costly for only column regular sparse
matrices than for double regular sparse ones. Indeed, in comparison with
the results obtained by the approach in \cite{Jia13b}, we find that for the
last six larger matrices our approach saves about half the CPU time $ptime$.

Finally, we make some comments on N-SPAI and N-PSAI($tol$). From
Tables~\ref{table-spai}--\ref{table-psai}, we find that the $spar$ by N-SPAI
and N-PSAI($tol$) are correspondingly comparable
but N-PSAI($tol$) is at least competitive with N-SPAI and
the former is considerably more effective
than the latter for preconditioning half of the test
problems, as indicated by the corresponding $n_c$ and $iter$.
This justifies
that PSAI($tol$) is more effective than SPAI to capture good sparsity patterns
of approximate inverses even for double regular sparse matrices. In
addition, we have noticed that N-PSAI($tol$) is more costly than N-SPAI.
This is simply due to our non-optimized code of PSAI($tol$) in the Matlab language,
whose efficiency is inferior to the optimized SPAI code written in C/MPI.

\subsection{S-RSAI($tol$) and N-RSAI($tol$)}\label{sec:5.2}

We will demonstrate that N-RSAI($tol$) is much more efficient than
S-RSAI($tol$). We take $\eta=0.4$ and $l_{\max}=10$, and
use {\em three} dominant indices $i$ with the largest $|r_k(i)|$ at
each loop for $k=1,2,\ldots,n$. Table~\ref{table-rsai} lists the results,
where the notation $*$ indicates that S-RSAI($tol$) could not construct
$M$ within 25 hours and $-$ indicates that
our computer is out of memory when constructing $M$ due to the appearance
of large sized LS problems resulting from the relatively dense rows of $A$.

\begin{table}[!htb]
\tabcolsep 5pt
\centering
\footnotesize
\caption{\label{table-rsai}S-RSAI($tol$) versus N-RSAI($tol$)}
\begin{tabular}{|c|c|c|c|c|c|c|c|c|c|c|c|c|}\hline
&\multicolumn{6}{|c|}{S-RSAI($tol$)}&\multicolumn{6}{|c|}
{N-RSAI($tol$)}\\
\cline{2-7}\cline{8-13}
matrices&$spar$&$ptime$&$n_{c}$&$a$&$iter$&$stime$&$spar$&$ptime$&$n_{c}$
&$a$&$iter$&$stime$\\
\hline
rajat04&7.26&9.05&3&0.34&17&0.05&0.52&0.31&3&0.31&19&0.09\\
\hline
rajat12&129&110&3&0.37&11&0.16&2.06&1.05&0&0.18&8&0.09\\
\hline
rajat13&$*$&$*$&$*$&$*$&$*$&$*$&3.85&13.6&2&0.12&8&0.39\\
\hline
memplus&8.81&675&0&0.63&16&0.18&1.73&39.0&0&0.53&16&3.09\\
\hline
ASIC\_100k&$-$&$-$&$-$&$-$&$-$&$-$&1.54&1292&0&0.27&8&35.4\\
\hline
dc1&$-$&$-$&$-$&$-$&$-$&$-$&1.19&1234&2&0.29&303&80.6\\
\hline
dc2&$-$&$-$&$-$&$-$&$-$&$-$&1.20&1206&2&0.51&66&54.9\\
\hline
dc3&$-$&$-$&$-$&$-$&$-$&$-$&1.22&1311&6&0.33&72&49.1\\
\hline
trans4&$-$&$-$&$-$&$-$&$-$&$-$&2.30&1984&0&0.23&13&8.51\\
\hline
trans5&$-$&$-$&$-$&$-$&$-$&$-$&2.14&1681&0&0.22&11&11.2\\
\hline
\end{tabular}
\end{table}

We see that all the $a<1$ for all the test problems. This again confirms the
reliability of our choices of $c_0$, $c_1$ and $c_2$.
We observe from the table that S-RSAI($tol$) fails to compute $M$
for the last six larger problems. This is because each $A$ has some
fully dense rows so that RSAI($tol$) involves solutions of some large LS
problems \eqref{reducemin2norm} with dimensions $p_{dr}\approx n$
at each loop, which exceeds the RAM of our computer.
However, with the same parameters,
N-RSAI($tol$) computes the $M$ for all the double regular sparse matrices very
efficiently and costs comparable CPU time to N-PSAI($tol$) except rajat13.
Meanwhile, these $M$ are effective for preconditioning and have similar
effects to those obtained by N-PSAI($tol$), as shown by the corresponding
$n_c$ and $iter$.  For rajat04, rajat12 and memplus, the $M$
constructed by S-RSAI($tol$) are much denser than those by N-RSAI($tol$), meaning
that S-RSAI($tol$) costs much more CPU time than N-RSAI($tol$) does,
but the $M$ by S-RSAI($tol$) have comparable preconditioning quality to
those by N-RSAI($tol$).
For rajat13, we see that S-RSAI($tol$) fails to construct $M$ within 25 hours.
This is because $r_k(i)$ has one dominant index $i$ that corresponds to
a dense row of $A$ at some loop when computing $m_k$ for $k=1,2,\ldots,n$, causing
totally $n$ large LS problems to emerge,
which is a huge computational task. Actually,
we have found that S-RSAI($tol$) costs about 81 hours to solve this problem.
In contrast, N-RSAI($tol$) is very efficient and uses only 13.6 seconds to
construct $M$, and Algorithm 2 costs only 14 seconds to solve the problem,
a very striking improvement over the standard approach!

Finally, we summarize the six algorithms S-SPAI, S-PSAI($tol$), and
S-RSAI($tol$) and N-SPAI, N-PSAI($tol$), and N-RSAI($tol$). As can be observed from
the numerical experiments, for the standard approach the CPU time $ptime$ of
constructing $M$ dominates the overall efficiency and it overwhelms the CPU
time $stime$ of solving the preconditioned linear systems by BiCGstab. This is
in accordance with the remark paragraph in the end of Section~\ref{sec:3}.
As a matter of fact, the numerical experiments have indicated
that, due to the memory storage and huge computational cost,
SPAI, PSAI($tol$) and RSAI($tol$) cannot generate
preconditioners when directly applied to six of the ten test irregular sparse
problems. By contrast, the transformation approach is very successful
and solves all the ten test double irregular sparse problems very efficiently.

\section{Conclusions}\label{sec:6}

We have considered SPAI, PSAI($tol$) and RSAI($tol$) for
double irregular sparse linear systems, which are common in practical applications.
We have shown that they are costly and even impractical for this class of problems,
but much cheaper to construct effective preconditioners for double regular
sparse ones. To fully exploit these three preconditioning procedures,
by making use of the Sherman-Morrison-Woodbury formula, we have proposed an
approach that transforms a double irregular sparse problem into
a small number of double irregular sparse ones with the same coefficient
matrix $\hat{A}$, for which we can use the three procedures to
efficiently construct good preconditioners for the resulting double
regular sparse problems and solve
the preconditioned linear systems by Krylov solvers. We have
considered the non-singularity and conditioning of $\hat{A}$.
To develop the transformation approach into a practical and
reliable algorithm, we have given a number of theoretical and practical
considerations.

The numerical experiments have demonstrated that the
proposed transformation approach has the very sharp superiority to
the standard approach that first preconditions the original double irregular
sparse problem \eqref{equation} by SPAI, PSAI($tol$) or RSAI($tol$) and then
solves the preconditioned linear system by Krylov solvers. The experiments
have also illustrated that the transformation approach greatly improves the
efficiency of the algorithm proposed in \cite{Jia13b} that only
transforms a double irregular sparse problem into
column regular sparse ones with the same coefficient
matrix. We have seen that, due to the memory storage and huge
computational cost, SPAI, PSAI($tol$) and RSAI($tol$) cannot generate
preconditioners when directly applied to six of the ten real-world
double irregular sparse problems.

\end{document}